\newtheorem{setup}[thm]{Setup}
\newtheorem{notation}[thm]{Notation}
\newtheorem{procedure}[thm]{Procedure}
\newtheorem{example}[thm]{Example}
\title{How to stab a polytope}
\keywords{}
\author{Sebastian Seemann}
\address{%
KU Leuven\\
\email{sebastian.seemann@kuleuven.be}
}
\author{Francesca Zaffalon}
\address{%
MPI for Mathematics in the Sciences, Leipzig\\
Weizmann Institute of Science\\
\email{francesca.zaffalon@mis.mpg.de}
}
\date{2024/10/15}
\DeclareMathOperator{\Gr}{Gr}
\DeclareMathOperator{\spanv}{span}
\DeclareMathOperator{\conv}{conv}
\DeclareMathOperator{\sign}{sign}
\DeclareMathOperator{\relint}{relint}
\DeclareMathOperator{\cl}{cl}
\DeclareMathOperator{\rowspan}{rowspan}
\DeclareMathOperator{\Id}{Id}
\DeclareMathOperator{\var}{var}
\newcommand{\mF}{\mathcal{F}}
\newcommand{\mS}{\mathcal{S}}
\newcommand{\mA}{\mathcal{A}}
\newcommand{\RR}{\mathbb{R}}
\newcommand{\CC}{\mathbb{C}}
\newcommand{\PP}{\mathbb{P}}
\newcommand{\Pk}{P^{[k]}}
\newcommand{\Pkmax}{P^{[k]}_{\max}}
\newcommand{\HPk}{\mathcal{H}_P^k}
\newcommand{\CPk}{\mathrm{C}_P^k}
\newcommand{\Fl}{\mathscr{F}\!\ell}
\newcommand{\mathcolorbox}[2]{\colorbox{#1}{$\displaystyle #2$}}
\begin{document}

\maketitle

% Abstract
\begin{abstract}
    \noindent We study the set of linear subspaces of a fixed dimension intersecting a given polytope.
    To describe this set as a semialgebraic subset of a Grassmannian, we introduce a Schubert arrangement of the polytope, defined by the Chow forms of the polytope’s faces of complementary dimension.
    We show that the set of subspaces intersecting a specified family of faces is defined by fixing the sign of the Chow forms of their boundaries. We give inequalities defining the set of stabbing subspaces in terms of sign conditions on the Chow form.
\end{abstract}

\section{Introduction}

The amplituhedron, introduced by Arkani-Hamed and Trnka~\cite{amplituhedron}, has initiated the use of new polyhedral and algebro-geometric methods in the study of scattering amplitudes in quantum field theories. 
% The amplituhedron quickly became an object of interest to mathematicians, for its easy definition and its combinatorial properties. 
For positive integers $k,n,m$ such that $k+m \leq n$, and given a $n\times(k+m)$ totally positive matrix, the amplituhedron $\mA_{n,k,m}$ is defined as the image of the totally non-negative Grassmannian $\Gr_{\geq 0}(k,n)$ under the map $\Tilde{Z}$
\[ \mA_{n,k,m} = \Tilde{Z}(\Gr_{\geq 0}(k,n)) \subseteq \Gr(k,k+m), \]
where $\Tilde{Z}(C) = C\cdot Z$, with $C$ a $k\times n$ matrix representing the point in $\Gr_{\geq 0}(k,n)$.
In particular, each point in the amplituhedron is a subspace intersecting the cyclic polytope defined by $Z$.
The case of $k=2,m=2$ has been proven to be a positive geometry~\cite{Simonampl}, as introduced in~\cite{PositiveGeometryandCanonicalForms}. 
Here the amplituhedron $\mA_{n,2,2}$ is a set of lines in $\PP^3$ that can be written as a product of a totally non-negative line in $\PP^{n-1}$ with a fixed totally positive matrix. This condition translates to lines in $\PP^3$ intersecting the cyclic polytope defined by $Z$ in a specific way.
More generally, loop amplituhedra are sets of linear subspaces intersecting a tree amplituhedron. For one loop and $k=1$, we are once again interested in describing some set of linear subspaces stabbing a cyclic polytope. See Section~\ref{sec : loop amplituhedra} for more details.

In this paper we consider a generalization of these questions. Given a full-dimensional polytope $P$ in projective space $\PP^{n-1}$ and given a positive integer $k<n$, how can we characterize the {\em $k$-stabbing set} $\Pk$ of $k$-dimensional linear subspaces intersecting the polytope $P$?
In order to characterize this set we introduce a hyperplane arrangement in the Grassmannian $\Gr(k,n)$ defined by the Schubert varieties describing subspaces intersecting $(n-k-1)$-dimensional faces of the polytope $P$. We divide the set of linear subspaces intersecting $P$ into stabbing chambers, based on the families of faces of the polytope they intersect. In Theorem~\ref{thm : sign var} we show that each stabbing chamber is defined by some Chow forms having fixed sign. This allows us to provide inequalities defining the semialgebraic set $\Pk$ by requiring that the sign condition is satisfied by the Chow forms of the boundaries of at least one $(n-k-1)$-dimensional face of $P$, Theorem~\ref{thm : inequ}. In Section~\ref{sec : simplicial} we focus on special classes of polytopes and study the relation of our stabbing sets with other important semialgebraic subsets of Grassmannians.

Earlier work studying hyperplane intersections of a given polytope with the goal of optimizing quantities such as the volume or the number of $k$-faces of such a section has been carried out in \cite{BestPolytopeSlice}. The \emph{slicing chambers} studied there are examples of the stabbing chambers we study in Section~\ref{sec : max stabb sign}. Hyperplanes slicing cyclic polytopes have also been studied in the context of the amplituhedron~\cite{m=1Amplituhedron}. Our aim is to generalize both settings to general polytopes being ``stabbed'' with subspaces of any dimension.

\section{Schubert arrangements and Chow forms}

We start by recalling the definitions of the main objects of our interest, Grassmannians and Schubert divisors. We then show how to associate a Schubert arrangement to a polytope. Finally, we study in more detail the loop amplituhedron and explain the relations to the problem we are considering.

\subsection{Grassmannians and Schubert divisors}

In this paper, the field is fixed to be $\RR$. Given a vector space $V\subseteq \RR^n$ of dimension $k$ , the image $\PP(V)\subseteq \PP^{n-1}$ of $V$ under the equivalence map defining the projective space is a linear space of dimension $k-1$. By abuse of notation we denote this space by $V$, but always refer to the dimension in affine space. 

\begin{notation} For any positive integers $k,n$ with $k\leq n$, we use the following notation: $[n]=\{1,\ldots, n\}$, $\binom{[n]}{k} = \{ I\subseteq [n] \text{ such that } |I| =k \}$.
\end{notation}

\vspace{5pt}
The \emph{Grassmannian} $\Gr(k,n)$ is the set of $k$-dimensional linear subspaces of $\RR^n$.
It can be realized as a projective variety inside $\PP^{\binom{n}{k}-1}$ via the Pl\"ucker embedding. Such an embedding can be realized in two ways, corresponding to different parameterizations of points in $\Gr(k,n)$.
\begin{itemize}
    \item If $V\in \Gr(k,n)$ is the row-span of a $k\times n$ matrix $M$, the \emph{dual Pl\"ucker coordinates} are defined as the $k\times k$ maximal minors of the matrix $M$. For $I\in \binom{[n]}{k}$, we will denote by $q_I$ the dual Pl\"ucker coordinate labeled by $I$.
    \item If $V\in \Gr(k,n)$ is the kernel of an $(n-k)\times n$ matrix $N$, the \emph{primal Pl\"ucker coordinates} are defined as the $(n-k)\times (n-k)$ maximal minors of the matrix $N$. For $J\in \binom{[n]}{n-k}$, we will denote by $p_J$ the primal Pl\"ucker coordinate labeled by $J$.
\end{itemize}
In both cases a vectors of Pl\"ucker coordinates are well-defined points in $\PP^{\binom{n}{k}-1}$ and they do not depend on the choice of the matrix. Moreover, primal and dual Pl\"ucker coordinates satisfy the following equality as points in projective space:
\[ (q_I)_{I\in \binom{[n]}{k}} = ((-1)^{\sign(I)} p_{[n]\setminus I})_{I\in \binom{[n]}{k}}, \]
where $\sign(I)= \sign(\sigma_I)$ with $\sigma_I$ such that if $I=\{ i_1<\cdots < i_{k} \}$ and $[n]\setminus I = \{j_1<\cdots < j_{n-k}\}$, then $\sigma_I = (i_1\, \ldots\, i_{k}\, j_1\, \ldots\, j_{n-k})$.

\begin{dfn}
    The \emph{totally non-negative Grassmannian} $\Gr_{\geq 0}(k,n)$ (resp. {\em totally positive Grassmannian} $\Gr_{>0}(k,n)$) is the semialgebraic subset of $\Gr(k,n)$ given by points $V\in \Gr(k,n)$ whose non-zero dual Pl\"ucker coordinates have all the same sign (resp. whose dual Pl\"ucker coordinates are all non-zero and have all the same sign). A {\em totally non-negative matrix} (resp. {\em totally positive matrix}) is any matrix representing an element of the totally non-negative Grassmannian (resp. totally positive Grassmannian).
\end{dfn}

We are interested in studying semialgebraic sets inside the Grassmannian $\Gr(k,n)$. In order to characterize and describe such sets, we will make use of well-studied subvarieties of the Grassmannian, Schubert varieties. An introduction to the topic can be found in \cite{fulton}. 
In particular, we are interested in Schubert divisors in $\Gr(k,n)$, i.e. Schubert varieties of codimension $1$. These are varieties inside $\Gr(k,n)$ given by points intersecting non-trivially a fixed $(n-k)$-dimensional linear space $W$. We denote such a Schubert divisor by $H_W$.

Schubert divisors are defined by one linear equation in the Pl\"ucker coordinates of $\Gr(k,n)$. Indeed, 
\[H_W = \left\{ V\in \Gr(k,n) \mid \det\begin{pmatrix} V\\ W \end{pmatrix}=0 \right\},\]
where we are identifying the spaces $V$ and $W$ with respectively a $k\times n$ and a $(n-k)\times n$ dual representing matrices.
The equation defining the variety $H_W$ can also be seen as the Chow form of the linear space $W$, i.e. the unique equation that vanishes exactly when the input subspace $V$ intersects the fixed subspace $W$. We denote the Chow form of $W$ by $C_W$. 
Suppose $W$ has dual Pl\"ucker coordinates given by $(q_I(W))_{I\in \binom{[n]}{n-k}}$ and let $(p_I)_{I\in \binom{[n]}{n-k}}$ and $(q_{[n]\setminus I})_{I\in \binom{[n]}{n-k}}$ be respectively the primal and the dual Pl\"ucker coordinates on $\Gr(k,n)$. Then, by Laplace expansion, the Chow form $C_W$ has equation
\begin{equation}\label{eq : Chow equation}
    \begin{split}
    C_W &= \sum_{I \in \binom{[n]}{n-k}} (-1)^{\sign(I)} q_I(W) q_{[n]\setminus I} \\
    &= \sum_{I\in \binom{[n]}{n-k}} q_I(W) p_I.
    \end{split}
\end{equation}
where the first polynomial lies in $\CC\bigl[q_J \mid J \in \binom{[n]}{k}\bigr]$ and the second polynomial lies in $\CC\bigl[p_I \mid I \in \binom{[n]}{n-k} \bigr]$. Chow forms are well-studied objects, which 
are defined in general for irreducible projective varieties. For an introduction, see~\cite{chow}.

\begin{dfn}
    A \emph{Schubert arrangement} in $\Gr(k,n)$ is a finite collection of Schubert divisors, $\mathcal{H} = \{H_1,\ldots, H_d\}$.
\end{dfn}

The study of Schubert arrangements was introduced in \cite{Schubertarr} and has applications in algebraic statistics and particle physics.

\subsection{The Schubert arrangement of a polytope}

The main object of study is the set of linear spaces of a given dimension which intersect a projective polytope. In this context, a {\em polytope} $P\subset \PP^{n-1}$ is the convex hull of finitely many points $v_1,\ldots,v_m \in \RR^n$, i.e.
\[ P = \left\{ \sum_{i=1}^m c_i v_i \mid c_i \geq 0 \right\}. \]
Equivalently, $P$ is the image under the projection map $\RR^n \to \PP^{n-1}$ of a cone over a polytope $P'\subset H$, for some hyperplane $H$ not containing the origin.

\begin{dfn}
    Let $P\subseteq \PP^{n-1}$ be a full-dimensional polytope and let $1 \leq k\leq n$. The set of $k$-dimensional linear spaces intersecting the polytope $P$ is denoted by
    \[ P^{[k]} = \{ V\in \Gr(k,n) \mid V\cap P \neq \emptyset \}, \]
    and called the \emph{$k$-stabbing set} of $P$.
\end{dfn}

In the following we will use the terms ``intersecting'' and ``stabbing'' interchangeably, while the term ``slicing'' will be used to denote codimension $1$ spaces intersecting the given polytope. We first study an open subset of $\Pk$, containing spaces intersecting the polytope in a generic way.

\begin{dfn}
    Let $P\subseteq \PP^{n-1}$ be a full-dimensional polytope and let $1 \leq k\leq n$. The set of $k$-dimensional linear subspaces intersecting $P$ only in faces of dimension $n-k$ and bigger is denoted by
    \[ \Pkmax = \{ V\in \Pk \mid V\cap G = \emptyset \text{ for all } G \text{ face of } P \text{ with } \dim(G)<n-k\}. \]
    and is called \emph{maximally $k$-stabbing set} of $P$.
\end{dfn}

We characterize the spaces $\Pk$ and $\Pkmax$ using a natural Schubert arrangement associated to this polytope, defined as follows.

\begin{dfn}
    Let $P\subseteq \PP^{n-1}$ be a full-dimensional polytope and let $1 \leq k\leq n$. The Schubert arrangement 
    \[ \HPk = \{ H_V \mid V=\spanv(F) \text{ for } F \text{ an } (n-k-1)\text{-dimensional face of } P \} \]
    is called the {\em $k$-face Schubert arrangement of $P$}. 
\end{dfn}

Note that $\Pkmax \supseteq \Pk \setminus \HPk$, that is, $k$-dimensional subspaces $V$ that stab the polytope $P$ and are not contained in any Schubert divisor from the $k$-face Schubert arrangement of the polytope are maximally stabbing the polytope.

In particular, $\Pk$ and $\Pkmax$ are characterized in terms of the sign vector of the Chow forms defining the Schubert arrangement. 
Unlike in the case of cyclic polytopes, which arises frequently in applications to physics, general polytopes do not come with a natural order on the vertices. In order to have a consistent sign characterization of the set $\Pk$, we fix the following setup.

\begin{setup}\label{setup : Chow forms}
    Let $P\subseteq \PP^{n-1}$ be a full-dimensional polytope and fix $1\leq k \leq n$. Fix an order $G_1, \ldots, G_f$ on the $(n-k-1)$-dimensional faces of $P$. For each face $G_i$ choose $n-k$ linearly independent vertices $v^{i}_1,\ldots, v^{i}_{n-k}$. Let $M^i$ be the $(n-k)\times n$ matrix containing $v^{i}_1,\ldots, v^{i}_{n-k}$ in its rows, in the given order. Define the Chow form of $G_i$ to be 
    \[ C_{G_i} = \sum_{I \in \binom{[n]}{n-k}} q_I(M^i)p_I. \]
    The \emph{vector of Chow forms} of $P$ is $\CPk = (C_{G_1},
    \ldots, C_{G_f})$.
\end{setup}

With this definition, the vector of Chow forms defines a point in projective space whenever evaluated on a point of $\Gr(k,n)$.

\begin{lemma}\label{lemma : nonzero}
    For $k,n,P$ fixed as above, the vector of Chow forms $\CPk(V) \in \PP^{f-1}$ for every $V\in \Gr(k,n)$, where $f$ is the number of $(n-k-1)$-dimensional faces of $P$.
\end{lemma}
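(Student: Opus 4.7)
The plan is to prove the equivalent statement that for every $V \in \Gr(k,n)$, at least one Chow form $C_{G_i}$ is nonzero at $V$; equivalently, there exists an $(n-k-1)$-dimensional face $G$ of $P$ with $V \cap \spanv(G) = \{0\}$. I would argue by induction on $k$, the case $k = 0$ being vacuous since $V = \{0\}$.

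The crucial geometric input is that the cone associated to $P$ in $\RR^n$ is pointed: by the definition recalled just before the lemma, $P$ is the cone over a polytope $P' \subset H$ for some hyperplane $H$ not containing the origin, so the cone has trivial lineality space. For any pointed polyhedral cone, the lineality space coincides with the intersection of the linear hyperplanes spanned by its facets, and therefore
\[ \bigcap_{F \text{ facet of } P} \spanv(F) \;=\; \{0\}. \]
Since $V$ is a nonzero subspace of $\RR^n$ as soon as $k \geq 1$, there must exist a facet $F$ of $P$ with $V \not\subseteq \spanv(F)$. For such an $F$ one has $V + \spanv(F) = \RR^n$, so $V' := V \cap \spanv(F)$ has dimension exactly $k - 1$ inside $\spanv(F)$.

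I would then apply the induction hypothesis to $F$, which is a full-dimensional polytope in $\PP(\spanv(F)) \cong \PP^{n-2}$, together with the $(k-1)$-dimensional subspace $V' \subseteq \spanv(F)$. This produces a face $G$ of $F$ of dimension $(n-1) - (k-1) - 1 = n - k - 1$ with $V' \cap \spanv(G) = \{0\}$. Because $\spanv(G) \subseteq \spanv(F)$, we automatically get $V \cap \spanv(G) = V' \cap \spanv(G) = \{0\}$; and since $G$ is a face of the facet $F$, it is itself an $(n-k-1)$-dimensional face of $P$, so $C_G(V) \neq 0$ and the induction closes.

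The main obstacle is the pointedness input, specifically the identification of the lineality space of a polyhedral cone with the intersection of its facet-defining hyperplanes. I would treat that as a standard fact from polyhedral geometry rather than reprove it. Granted that, the induction descends cleanly because decreasing both the ambient dimension and the subspace dimension by one leaves the target face dimension $n - k - 1$ invariant, and the facet $F$ used at each step inherits pointedness from $P$.
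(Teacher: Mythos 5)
Your proof is correct, and it takes a genuinely different inductive route from the paper's. The paper inducts on the ambient dimension $n$ and, assuming $\CPk(V)=0$, drops directly from $V$ to $V\cap\spanv(F')$ inside the span of some $(n-k-1)$-dimensional face $F'$; the resulting subspace can have any dimension strictly less than $k$, and the induction hypothesis is invoked across the whole range of smaller polytopes and subspace dimensions at once. You instead descend through \emph{facets}, dropping both $k$ and the ambient dimension by exactly one at each step, which keeps the target face dimension $n-k-1$ invariant and forces $V' = V\cap\spanv(F)$ to have dimension exactly $k-1$ (since $\spanv(F)$ is a hyperplane not containing $V$). This buys two things: the application of the induction hypothesis is a single clean instance rather than a family of cases, and you isolate the one nontrivial geometric input --- that the cone over a full-dimensional projective polytope is pointed, hence $\bigcap_{F\text{ facet}}\spanv(F)=\{0\}$ --- which is used only implicitly in the paper (it is what guarantees $V\not\subseteq\spanv(F)$ for some face $F$). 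Two small points: you implicitly use that a facet of $P$ has a pointed cone, which holds because a face of a pointed cone is pointed; and you may find $k=1$ a more comfortable base case (a nonzero line avoids some facet hyperplane by pointedness) than $k=0$, which requires interpreting Chow forms on $\Gr(0,m)$.
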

\begin{proof}
We need to show that there is no $V$ such that $\CPk(V) = 0.$ In other words, we need to show that there is no linear space $V$ intersecting every linear span of $(n-k-1)$ faces.
We proceed by induction over $n$. For the base case let $n=3$, so $P \subset \PP^2$ is a planar full dimensional polytope. 
For $k=1$, then $\CPk(V) = 0$ if and only if $V$ is a point in $\PP^n$ contained in all linear span of edges of a $2$ dimensional polytope. 
For $k=2$, $V$ is a line containing all vertices of $P$. Both cases are not possible due to the full-dimensionality of $P$.

Assume by induction that the vector of Chow forms $\CPk(W)$ is not zero for any $\ell<n-1$ dimensional polytope $P$ and for all $W\in \Gr(k,\ell)$ with $k<\ell$.
Suppose by contradiction that there exist some $(n-1)$-dimensional polytope $P$ and $V\in \Gr(k,n)$ with $\CPk(V) = 0$. Since $V$ is not contained in $\spanv(F)$ for every $(n-k-1)$-dimensional face $F$, there exists an $(n-k-1)$-dimensional face $F'$ of $P$ such that $V\cap \spanv (F)$ has dimension smaller than $k$. Then, $\mathrm{C}_{F}^k(V\cap {\spanv}(F))=0$, contradicting the induction hypothesis.

Finally, it is easy to see that the vector $\CPk(V)$ is well-defined in projective space, that is, evaluations of $\CPk$ at different representative of the same linear space $V$ differ by a scalar multiple, by applying \cref{eq : Chow equation}.
\end{proof}

We will be interested in comparing the signs of points in a projective space. Formally, given $v\in \PP^N$ with homogeneous coordinates $(v_0, \ldots, v_N)$, we define $\sign(v) \in \{ +,0,- \}^{N+1}$ to be the vector defined by
\[ \sign(v)_i = \begin{cases}
    + & \text{ if } v_i>0\\
    0 & \text{ if } v_i=0\\
    - & \text{ if } v_i<0.
\end{cases} \]
Let $\alpha,\beta \in \{ +,0,- \}^{N+1}$ be two sign vectors. We denote by $-\alpha$ the sign vector obtained by swapping all the signs. We say that $\alpha \equiv \beta$ if $\alpha = \beta$ or $\alpha = -\beta$. Denote by $\mathrm{S}_N$ the set of sign vectors $\{ +,0,- \}^{N+1}$ modulo the equivalence relation.
The set $\mathrm{S}_N$ has a partial order defined as follows. Two sign vectors $\alpha, \beta \in \mathrm{S}_N$ are such that $\alpha \leq \beta$ if $\alpha_i=\beta_i$ for each $i=0,\ldots,N$ such that $\alpha_i\neq 0$. That is, $\alpha \leq \beta$ if $\alpha$ can be obtained from $\beta$ by adding some zeroes.

\subsection{Loop amplituhedra and stabbed polytopes}\label{sec : loop amplituhedra}

In this section we review in more detail the definitions of tree and loop amplituhedra and explain the relations with the question we are answering. In order to do so, we start by recalling the definition of the loop Grassmannian, as introduced in~\cite{PositiveGeometryandCanonicalForms}.

\begin{dfn}
    Given a sequence $\underline{k}:=(k_1,\dots,k_L)$ of positive integers, the \emph{$L$-loop Grassmannian} $\Gr(k,n;\underline{k})$ is the family of collections of subspaces $V_S \subset \mathbb{R}^n$ indexed by $S$, where $S:=\{s_1,\dots, s_l\}$ is a subset of $\underline{k}$ with $k_S:=k_{s_1}+\dots +k_{s_l}\leq n-k$, such that $\dim V_S=k+k_S$ and $V_S \subset V_{S^{\prime}}$ for $S \subset S^{\prime}$.
    The {\em totally non-negative loop Grassmannian} $\Gr_{\geq 0}(k,n;\underline{k})$ is the subset of collections of totally non-negative subspaces, where a subspace $V \subset \mathbb{R}^n$ is totally non-negative if all its nonzero Pl\"ucker coordinates have the same sign. 
\end{dfn}

This generalization of the totally non-negative Grassmannian allows us to define the loop amplituhedron.

\begin{dfn}
   Let $ \Gr_{\geq 0}(k,n;\underline{k})$ be a totally non-negative loop Grassmannian with $\ell:=k_1=\dots =k_L$. A totally positive $Z \in Mat(n,k+m)$ with $k+L\ell \leq k+m \leq n$ defines a map 
   $$\Tilde{Z}:\Gr_{\geq 0}(k,n;\underline{k}) \to \Gr(k,k+m;\underline{k})$$
   by mapping each subspace individually.
   The {\em loop amplituhedron} $\mathcal{A}_{(k,n,m;\ell,L)}$ is the image of the totally non-negative loop Grassmannian $\Gr_{\geq 0}(k,n;\underline{k})$ under this~map. 

   If we don't require $Z$ to be totally positive, the image of the potentially rational map $\Tilde{Z}$ is called a \emph{loop-Grasstope}.
\end{dfn}

\begin{rem}
    For a totally positive matrix $Z$, the map $\Tilde{Z}$ is well-defined for non-negative subspaces of arbitrary dimension \cite{LamGrasstopes} and respects incidences, so it is a well-defined map into the target loop Grassmannian. More generally, grasstopes have been studied in detail for the $m=1$ case~\cite{grasstopes}.
\end{rem}

The most studied case is the \emph{tree amplituhedron}, i.e. without any loops. As seen in the introduction and studied more in detail in Section~\ref{sec : stabbing cyclic}, amplituhedra can be interpreted as a set of lines stabbing a cyclic polytope. Here we show that higher loop amplituhedra are also related to stabbing sets.

Let us focus on the $1$-loop amplituhedron. The $L=1$ loop Grassmannian $\Gr(k,n;(\ell))$ is also known as the partial flag variety $\Fl(k,k+\ell)$ of partial flags $V_1 \subset V_2$, where $\dim(V_1)=k$ and $\dim(V_2)=k+\ell$.
The loop amplituhedron $\mathcal{A}_{(k,n,m;\ell,1)}$ is then given as a subset of the product of tree amplituhedra $\mathcal{A}_{k,n,m}$ and $\mathcal{A}_{k+\ell,n,m}$. 
For $k=1$ and $L=1$, the $1$-loop amplituhedron $\mathcal{A}_{(1,n,m;\ell,1)}\subset \Fl(1,1+m)\subset \Gr(1,1+m)\times \Gr(1+\ell,1+m)$ is a subset of the incidence correspondence 
\[\mathcal{I}_{P \times P^{[\ell]}} \subset \Gr(1,1+m)\times \Gr(1+\ell,1+m),\]
where $P$ is the cyclic polytope $ \mathcal{A}_{(1,n,m;0,0)}\subset \Gr(1,1+m)\cong \mathbb{P}^m.$
In particular, there is a projection of the loop amplituhedron into the $\ell$-stabbing set of the cyclic polytope 
\[\pi:\mathcal{A}_{(1,n,m;\ell,1)} \to P^{[\ell]}.\]
In order to understand the loop amplituhedron, characterizing the image of $\pi$ is necessary, as well as understanding its fibers.

\begin{example}
Let $k=2$, $n=4$, $m=2$, $L=1$, $\ell=1$ and $Z=\Id$. Then, the image of the $1$-loop amplituhedron $\mathcal{A}_{(1,4,2;1,1)}$ in $\Pk$ consists of totally non-negative lines intersecting the simplex $\Delta \subset \PP^3$.
A element $V$ in $\Gr(k,n)$ is totally non-negative, if and only if every vector $v\in V$ viewed as a sequence of numbers changes sign at most $k-1$ times, \cite[Theorem 1.6]{karp}. Hence the line $V \in \Gr(2,4)$ is totally non-negative, and thus in the image of $\pi$, if and only if it does not pass through points with one of the following sign vectors 
\[(+,+,-,+),(+,-,+,+),(+,-,+,-),(+,-,-,+).\]
\end{example}

\section{Stabbing sets and Chow forms}\label{sec : max stabb sign}

In this section we will describe how sign vectors characterize the sets $\Pk$ and $\Pkmax$. 
First, we restrict to study the set $\Pkmax$. 
We do this by decomposing the set based on how the $k$-subspace intersects the polytope. We then show that each piece of this decomposition can be described by fixing some signs in the vector of Chow forms.

\subsection{Chamber decomposition of $\Pkmax$}

Consider the chamber decomposition of $\Pkmax$ defined as follows. For $V\in \Pkmax$, the associated chamber contains all the $k$-dimensional subspaces of $\RR^n$ that stab the polytope in the same $(n-k)$-dimensional faces. Formally, let 
\begin{align*}
    \mS_P(V) = \left\{ W\in \Gr(k,n) \left| \begin{matrix}W\cap \relint(F) \neq \emptyset \iff V \cap \relint(F) \neq \emptyset \\\text{ for all } (n-k)\text{-dim. faces } F \text{ of } P\end{matrix}\right.\right\}.
\end{align*} 
These sets are called \emph{stabbing chambers} of $P$. Given a stabbing chamber $\mS_P(V)$, we say that $\mS_P(V)$ is \emph{determined} by faces $F_1,\ldots, F_r$ of $P$ if $V$ stabs them and only them among all the $(n-k)$-dimensional faces of $P$. We denote by $\mF_P(V) = \{F_1,\ldots,F_r\}$ the set of determining faces of a stabbing chamber.

Clearly, stabbing chambers define a decomposition of $\Pkmax$, that is, for every $V,W \in \Pkmax$, the corresponding chambers $\mS_P(V)$ and $\mS_P(W)$ are either equal or disjoint.
In Theorem~\ref{thm : sign var} we prove that stabbing chambers of $P$ can be characterized by the signs of some of the Chow forms associated to $P$. Formally, given a stabbing chamber $\mS_P(V)$ with determining faces $\mF_P(V)$, denote by 
\[ \CPk|_{\mF(V)} = (C_G \mid G \text{ is a facet of } F \text{ for some } F\in \mF_P(V)). \]
Fixing the sign of this sub-vector of Chow forms describes the stabbing chamber of $V$.

\begin{thm}\label{thm : sign var}
    The following holds:
    \[ \mS_P(V) = \{ W\in \Gr(k,n) \mid \sign(\CPk|_{\mF(V)}(V)) \equiv \sign(\CPk|_{\mF(V)}(W)) \}. \]
\end{thm}
\begin{proof}
    Let $V\in \Pkmax$ with determining faces of the corresponding chamber given by $\mF_P(V) = \{F_1,\ldots,F_r\}$. 
    Let $W\in \mS_P(V)$, we want to prove that the vector of Chow forms evaluated at $W$ has the same sign vector as the one of $V$ when restricting to the facet of the determining faces.

    The intersection $V\cap P$ is a $(k-1)$-dimensional polytope given as the convex hull of $V\cap F_i = v_i$, for $i=1,\ldots,r\geq k$. Then $V$ can be represented as a $k\times n$ matrix containing $k$ of the vectors $v_i$ as its rows. Without loss of generality, we can assume that these are the first $k$ points $v_i$. We will denote this matrix by $V$. We want to show that we can move the points $v_i$ to the points of intersection $W\cap F_i$ without changing the signs of the Chow forms.

    Let $w_i=W\cap F_i$ and consider the matrix
    \[ U^1_t:=\begin{pmatrix}
    (1-t)v_1 + tw_1 \\
    v_2\\
    \vdots   \\
    v_{k} \\
    \end{pmatrix}. \]
    For every $t\in [0,1]$, the space defined by the matrix $U^1_t$ lies in $\Gr(k,n)$, by convexity of the polytope.
    Hence the map $t \mapsto U^1_t$ is a well-defined path in $\Gr(k,n)$. Moreover, $(1-t)v_1 + tw_1 \in \relint(F_1)$ for each $t\in [0,1]$. The Pl\"ucker coordinates are linear in $t$ along this path. Hence, each Chow form of the facets of $F_1$ is non-zero on each $U_t$, for $t\in[0,1]$. Since the Chow forms are continuous and linear in the Pl\"ucker coordinates, this implies that their sign does not change in the path from $V=U^1_0$ to $U^1_1$. Repeat the same operation for $i=2,\ldots,k$ by defining 
    \[ U^i_t:=\begin{pmatrix}
    w_1 \\
    \vdots\\
    w_{i-1}\\
    (1-t)v_i + tw_i\\
    v_{i+1}\\
    \vdots   \\
    v_{k} \\
    \end{pmatrix}. \]
    In each path we never cross any boundaries of $F_i$, hence the sign of the Chow forms are fixed for every facet of $F_i$. By repeating the same operation for any $k$-subset of the determining faces $F_1,\ldots, F_r$ we obtain the desired result, i.e.
    \[ \sign(\CPk|_{\mF(V)}(V)) = \sign(\CPk|_{\mF(V)}(W)). \]

    \vspace{5pt}
    Suppose now that the sign characterization is true for some $W\in \Gr(k,n)$ and assume by contradiction that $W\not\in \mS_P(V)$. Then there exists some face $F_i\in \mF_P(V)\setminus \mF_P(W)$. Without loss of generality, we can assume that $i=1$. Let $w_i = W\cap \spanv(F_i)$ and define the matrices $U_t^1$ as above. 
    We can choose representative of $w_i$ such that the signs of $C_P(V)|_{\mF(V)}$ and $C_P(W)|_{\mF(V)}$ are the same in affine space.
    By assumption, the intersection of $U_t^1$ with $\spanv(F_1)$ moves outside of $F_1$, hence there exists a $t_0\in [0,1]$ such that $U^1_{t_0}$ intersects a facet $G$ of $F_1$. Since the Pl\"ucker coordinates of $U_t$ are linear in $t$ and the Chow forms are linear in the Pl\"ucker coordinates of $\Gr(k,n)$, $C_G(U^1_t)$ changes sign along the path for $t\in [0,1]$. Since we defined $w_1$ to be the intersection of $W$ with $\spanv(F_1)$, the sign of $C_G$ will stay fixed as we perform the paths defined by $U^2_t, \ldots, U^k_t$. Hence we obtain that $\sign(C_G(W))$ is different (in affine space) from $\sign(C_G(V))$, against the hypothesis. It follows that $W\in \mS_P(V)$.
\end{proof}

Note in particular that there is a dense subset of each stabbing chamber $\mS_P(V)$ which is given by the union of finitely many regions in the complement of the $k$-face Schubert arrangement of $P$.
As a corollary of the proof of the previous theorem we obtain the following result about the topology of the stabbing chambers of $P$.

\begin{cor}
    The closure of the stabbing chambers $\cl(\mS_P(V))$ of the Schubert arrangement $\HPk$ in $\Pk$ are contractible.
\end{cor}
\begin{proof}
 By the proof of \ref{thm : sign var}, for $W \in \mS_P(V)$ and $V\in \Pkmax$ there is a path from $V$ to $W$ where the $\sign(\CPk|_{\mF(V)})$ stays constant. 
\end{proof}

\begin{rem} An interesting problem in the context of Schubert arrangements is counting the number of components in its real region. In general, these regions are not uniquely defined by the sign vector of the defining equations.
See for example \cite[Chapter 5]{Schubertarr}, where it is possible to observe that Theorem~\ref{thm : sign var} might fail if the Schubert arrangement does not come from the faces of a polytope.
For more details about the number of regions in real and complex Schubert arrangement, their connection to \emph{ML-degrees} and a study of their Euler characteristic see \cite{Schubertarr,computingarr}.

\begin{example}[\protect{\cite[Example 5.2]{Schubertarr}}]\label{ex : bad example}
    Consider the four lines $l_1,\dots,l_4 \subset \PP^3$ with respective Chow forms $p_{12},p_{14},p_{23},p_{34}$.
    They are not the linear spans of the edges of a convex polytope, but they are the linear spans of four of the edges of the simplex $\Delta \in \PP^3$, so Theorem~\ref{thm : sign var} does not apply.
    Consider the two lines $V$ and $W$ spanned by 
    
    \[V= \begin{pmatrix}
    1&0& 1&1   \\
    0 &1 & 2 & 1 \\ 
    \end{pmatrix},\qquad  
    W= \begin{pmatrix}
    1&0& 1&-1   \\
    0 &1 & -2 & 1 \\ 
    \end{pmatrix}.\]
     The line $V$ stabs the simplex $\Delta$ through the facets ${134}$ and ${234}$, and $W$ does not intersect $\Delta$.  
     The sign vector  $(p_{12},p_{14},p_{23},p_{34})=(+,+,-,-)$
    of $V$ and $W$ are equal. By Theorem~\ref{thm : sign var} we know that the signs of $V$ and $W$ on $p_{13},p_{24}$ must be different.
    This is indeed the case. See Figure~\ref{fig : bad example} for a visual representation of a move from $V$ to $W$.
    \begin{figure}
        \centering
        \includegraphics[width=0.5\linewidth]{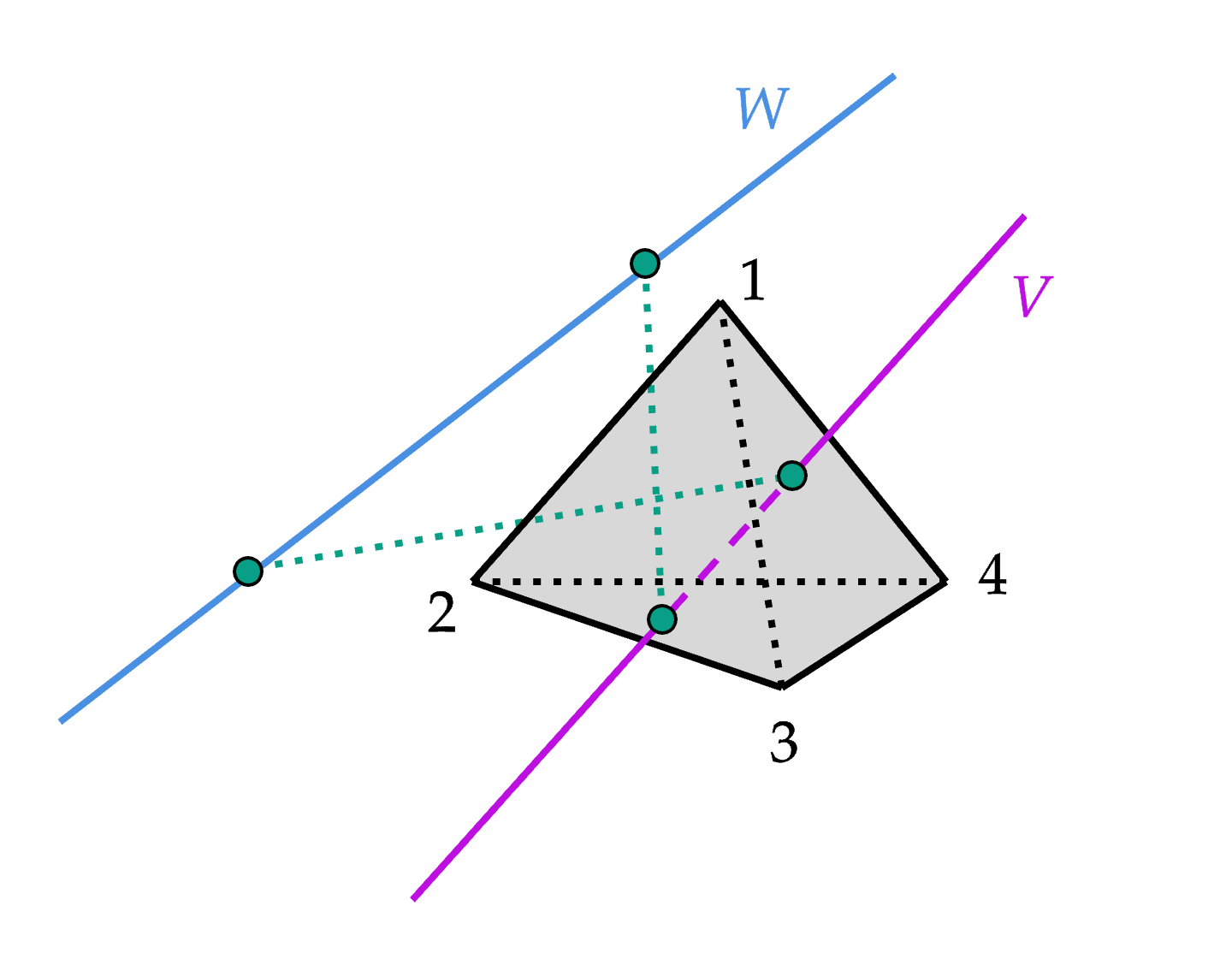}
        \caption{A visual representation of Example~\ref{ex : bad example}. We can see that the line $V$ stabs the simplex and the line $W$ doesn't. Moreover we can see the points that the path constructed in the proof of Theorem~\ref{thm : sign var} would follow. These exit the polytope without crossing any divisor in the Schubert arrangement considered.}
        \label{fig : bad example}
    \end{figure}
\end{example}
\end{rem}

Since Schubert arrangements arising from polytopes are not generic, we expect them to have nice properties. In particular, we conjecture that the following holds.
\begin{conj}
    Given a full-dimensional polytope $P\subseteq \PP^{n-1}$, the number of cells in its $k$-face Schubert arrangement inside of $\Pk$ is equal to the number of sign vectors realized by $V\in \Pk\setminus \HPk$.
\end{conj}

\subsection{Recovering $\Pk$ from $\Pkmax$}

This section is dedicated to the study of the $k$-stabbing set of a polytope $P$ and its relation to the maximally $k$-stabbing set. In particular, we can recover all points intersecting $P$ in small dimension faces from the closure of the maximally $k$-stabbing set.

\begin{prop}\label{thm : closure}
    Let $P\subseteq \PP^n$ be a full-dimensional polytope and fix $0\leq k \leq n$. Then 
    \[ \Pk = \cl(\Pkmax). \]
\end{prop}
\begin{proof}
    First, if $V\in \cl(\Pkmax)$, then clearly $V$ will intersect the polytope $P$, that is, $V\in \Pk$.

    Conversely, suppose $V\in \Pk \setminus \Pkmax$, that is, $V$ intersects $P$ in at least one face $F$ of dimension smaller than $n-k$. Consider two distinct cases.
    In the first case, suppose that $V\cap P$ has infinitely many points. Choose $k$ linearly independent points $u_1,\ldots,u_k$ on $P\cap V$, so that $V$ is the span of $u_1,\ldots, u_k$. Let $p\in \relint(P)$. Then the space 
    \[ U_\epsilon = \rowspan\begin{pmatrix}
        u_0 +\epsilon p\\ \vdots \\ u_k +\epsilon p
    \end{pmatrix} \in \Gr(k,n)\]
    for $0<\epsilon\ll 1$ and $U_\epsilon \in \Pkmax$ for a choice of $p$ generic enough. Since $\lim_{\epsilon \to 0} U_\epsilon = V$, it follows that $V$ is in the closure of $\Pkmax$.

    In the second case, suppose that $V\cap P=\{ u_1\}$. Since $P$ is full-dimensional polytope, there exists a sequence of spaces $W_m$ intersecting $P$ in infinitely many points and converging to $V$ as $m$ goes to infinity. Since by the previous step each of the $W_m$ lies in $\cl(\Pkmax)$ and the closure is idempotent, then $V\in \cl(\Pkmax)$.
\end{proof}

The closure operations translates to the sign description as explained in the following theorem.

\begin{prop}\label{thm : sign closure}
    Let $P\subseteq \PP^n$ be a full-dimensional polytope and fix $0\leq k \leq n$.
    For every $V\in \Gr(k,n)$, we have that $V\in \Pk$ if and only if there exists $W\in \Pkmax$ such that 
    \[ \sign(\CPk(V))\leq \sign(\CPk(W)). \]
\end{prop}
\begin{proof}
    If $V\in \Pk$, then $\sign(\CPk(V))\leq \sign(\CPk(W))$ for~some~$W\in\Pkmax$.

    Conversely, let $V$ and $W$ be such that $ \sign(\CPk(V))\leq \sign(\CPk(W))$ and $W\in\Pkmax$. If $V$ and $W$ have the same nonzero sign vector on the boundary of an $(n-k)$-dimensional face of $P$, then $V\in \Pk$, by proof of Theorem~\ref{thm : inequ}.
    If that is not true, then every $(n-k)$-dimensional face $F$ of $P$ has a facet $G_F$ such that $C_{G_F}(V)=0$. Since by Lemma~\ref{lemma : nonzero}, the vector of Chow form is not zero, we can choose a face $F$ of $P$ such that $\CPk(V)$ is not zero on every boundary of $F$, i.e. $V$ is not contained in the linear span of $F$.
    
    Now consider the path in $\Gr(k,n)$ obtained as follows. Let $v_1$ and $w_1$ be the intersections of $V$ and $W$ with $\spanv(F)$, respectively. Consider, similarly to proof of Theorem~\ref{thm : sign var}, the path defined by
     \[ U_t:=\begin{pmatrix}
        (1-t)v_1 + tw_1 \\
        w_2\\
        \vdots   \\
        w_{k} \\
        \end{pmatrix}, \]
    which is linear in Pl\"ucker coordinates. 
    The Chow form $\CPk(U_t)|_{G}$ is linear in the Pl\"uckers along the path $U_t$ and $\CPk(U_t)|_{G}=0$ exactly at $t=0$. 
     We now want to show that $v_1$ is contained in $G$.
     Assume $\spanv(G)\cap V = v_1$ is not inside $P$. Then, the path $ (1-t)v_1 + tw_1 $ in $\PP^n$ is contained in $F$ at $t=1$ and not contained at $t=0$. As the face $F$ is a convex polytope itself, the path passes through a facet $G'$ of $F$ at some $t_0 \in (0,1)$. 
     The chow form of $G'$ evaluated at $U_t$ is also linear and evaluates to $0$ at $t_0$. Thus, $\CPk(U_t)|_{G'}$ changes sign along $U_t$. 
     Extending the path to $\Tilde{U_t}$ as done in theorem 3.1. by moving the remaining $w_i$ to $v_i$, the Chow form $\CPk(\Tilde{U_t})|_{G'}$ does not change sign along this path, as it is piecewise linear and never zero. 
     Hence, $\CPk(V)|_{G'}$ and $\CPk(W)|_{G'}$ have different sign, contradicting the assumption.
     We conclude that $v_1$ is in $G$ and thus $V \in \Pk$.
\end{proof}

Not all the sign vectors obtained from a $\sign(\CPk(W))$ for some $W\in \Pkmax$ by adding some zeroes will be realized. A trivial example is the zero vector that by Lemma~\ref{lemma : nonzero} is never realizable. Another example is a Chow vector which is zero on all faces bounding two faces such that the intersection of their linear span has dimension smaller then $k$.
Note that for the stabbing sets of simplices, studying the realizability of sign vectors of Chow forms of $P$ corresponds to studying realizablity of oriented matroids. 

\section{Equations defining $\Pk$}

Using the properties of the $k$-stabbing set of $P$ studied in the previous section we can construct equations defining $\Pk$ as a semialgebraic subset of $\Gr(k,n)$. 

The description we want to define relies on the fact that, in order to know whether a $k$-subspace $V$ stabs the polytope we need to check less conditions then the ones prescribed by Theorems~\ref{thm : sign var} and~\ref{thm : sign closure}. In fact, in order to describe the spaces stabbing a given $(n-k)$-dimensional face of $P$, we only need to fix the signs vector of the Chow forms of the boundaries of that face.

\begin{thm}\label{thm : inequ}
    Let $P,k,n$ be fixed as in Setup~\ref{setup : Chow forms}. A $k$-subspace $V\in \Gr(k,n)$ intersects the polytope $P$ if and only if there exists $W\in \Pkmax$ and an $(n-k)$-dimensional face $F$ of $P$ such that
    \[ \sign(\CPk(V)|_F) \leq \sign(\CPk(W)|_F). \]
\end{thm}
\begin{proof}
    If $V$ intersects the polytope $P$ then by Theorem~\ref{thm : sign closure} we know that there must exist such a space $W\in \Pkmax$.

    Conversely, suppose that given $V\in \Gr(k,n)$ there exists $W\in \Pkmax$ such that
    $\sign(\CPk(V)|_F) \equiv \sign(\CPk(W)|_F)$, for some $(n-k)$-dimensional face $F$ of $P$. Suppose by contradiction that $V$ does not stab $P$. Let $\mF_P(W)$ be the family of $(n-k)$-dimensional faces of $P$ intersected by $W$. 
    Let $\mF_P(W) = \{ F=F_1, F_2, \ldots, F_r\}$ and define $w_i = F_i \cap W$ and $v_i = \spanv(F_i)\cap W$. 
    Construct matrices $U_t^i$ as in the proof of Theorem~\ref{thm : sign var}, which define a path from $W$ to $V$. 
    Along this path there exists $i_0\in \{0,\ldots,k\}$ and $t_0$ such that $U_t^i$ intersects $P$ for $t<t_0$ and $i\leq i_0$ and $U_t^i$ does not intersect $P$ for $t>t_0$ and $i\geq i_0$. 
    We have $i_0>1$ since in the first move we do not change any sign, hence we stay in the same stabbing chamber. Moreover, for $i>1$, the position of the space $U_t^i$ with respect to the face $F$ is not changed. Hence the spaces along the path $U_t^i$ can only exit the polytope through the face $F$, that is, $U_{t_0}^{i_0}\cap P$ is contained in $F$. This is only possible if some of the Chow forms of the boundaries of $F$ go to zero on $U_{t_0}^{i_0}$ and eventually change sign. This is against the assumption that $V$ and $W$ have the same signs along the boundaries of $F$, hence $V$ stabs the polytope.

    Finally, if $W\in \Pkmax$ is such that $\sign(\CPk(V)|_F) < \sign(\CPk(W)|_F)$, for some $(n-k)$-dimensional face $F$ of $P$, then applying the previous point we obtain that $V$ intersects $P$ in the face $F$ with arguments similar to the ones given in the proof of Theorem~\ref{thm : sign closure}. Hence $V\in \Pk$.
\end{proof}

Given a full-dimensional polytope $P\subseteq \PP^n$, the following procedure produces the set of inequalities defining the set $\Pk$ inside the Grassmannian $\Gr(k,n)$.

\begin{procedure}
    Let $P\subseteq \PP^n$ be a full-dimensional polytope and fix $1\leq k \leq n$. Define the Chow forms of $P$ according to Setup~\ref{setup : Chow forms}.
    \begin{enumerate}
        \item For each $(n-k)$-dimensional face $F$ of $P$ define the point $v_F$ as the sum of the vertices of $F$. In particular $v_F\in F$.
        \item Let $F_1,\ldots, F_k$ be $(n-k)$-dimensional faces of $P$ that are not contained in a $k$-dimensional face of $P$. Evaluate the vector of Chow forms $\CPk$ on $V_{F_1,\ldots,F_k}=\spanv(v_{F_1},\ldots, v_{F_k})$.
        The set of $V\in \Gr(k,n)$ that intersect $P$ in $F_i$ is given by 
        \[ \{ V\in \Gr(k,n) \mid \sign(\CPk(V)|_{F_i}) \leq \sign(\CPk(V_{F_1,\ldots,F_k})|_{F_i}) \}. \]
        \item Repeat the process until all the $(n-k)$-dimensional faces of $P$ have been considered. The set $\Pk$ is the union of these semialgebraic sets.
    \end{enumerate}
\end{procedure}

\begin{rem}
    If $k=1$, we are interested in studying the set of points intersecting the polytope. This construction recovers the hyperplane description of a polytope. 

    More interestingly, if $k=n-1$, then we recover the construction introduced in~\cite{BestPolytopeSlice}. In this case the stabbing chambers correspond to slicing chambers. Brandenburg, De Loera and Meroni studied optimization problems on what is the best way to slice a polytope.
\end{rem}

\begin{exa}\label{ex : octahedron}
    Consider the octahedron $P = \conv(v_{12}, v_{13}, v_{14}, v_{24}, v_{23}, v_{34})\subseteq \PP^3$ where $v_{ij} = e_i + e_j$, for $\{e_i\}_{i\in [4]}$ standard basis vectors. Fix $k=1$, that is, we want to study the set of lines intersecting the octahedron.

    The first set is to define the Chow forms we are interested in studying. Associate to an edge $v_{ij}-v_{ik}$ with $j<k$ the Chow form defined by the matrix $E^{ij}_{ik}$ with rows $v_{ij}, v_{ik}$. That is, if $C^{ij}_{ik} = \sum_{I\in \binom{[4]}{2}} q_I(E^{ij}_{ik})p_I$,  the vector of Chow forms of the octahedron is defined by
    \begin{align*}
        \CPk &= \left(C^{12}_{13},\, C^{12}_{14},\, C^{12}_{23}, \, C^{12}_{24},\, C^{13}_{23},\, C^{13}_{14},\, C^{13}_{34},\, C^{14}_{24},\, C^{14}_{34},\, C^{23}_{24},\, C^{23}_{34},\, C^{24}_{34}\right).
    \end{align*}
    If we are interested in the set of lines passing through the face defined by $v_{12}, v_{13}, v_{23}$ then we can compute the sign vector of the Chow~forms~evaluated~at
    \[ V = \rowspan \begin{pmatrix}
        2 & 2 & 2 & 0\\
        0 & 2 & 2 & 2
    \end{pmatrix}. \]
    This gives rise to the following sign vector:
    \[ \left(\mathcolorbox{blue!30}{-},\, -,\, \mathcolorbox{blue!30}{+}, \, 0,\, \mathcolorbox{blue!30}{-},\, +,\, 0,\, +,\, -,\, \mathcolorbox{red!40}{-},\, \mathcolorbox{red!40}{+},\, \mathcolorbox{red!40}{-}\right), \]
    where the signs highlighted in blue correspond to boundaries of the face defined by $v_{12}, v_{13}, v_{23}$ and the signs highlighted in red correspond to boundaries of the face defined by $v_{23}, v_{24}, v_{34}$, the other face stabbed by $V$. See Figure~\ref{fig : octahedron}.
    \begin{figure}
        \centering
        \includegraphics[width=0.5\linewidth]{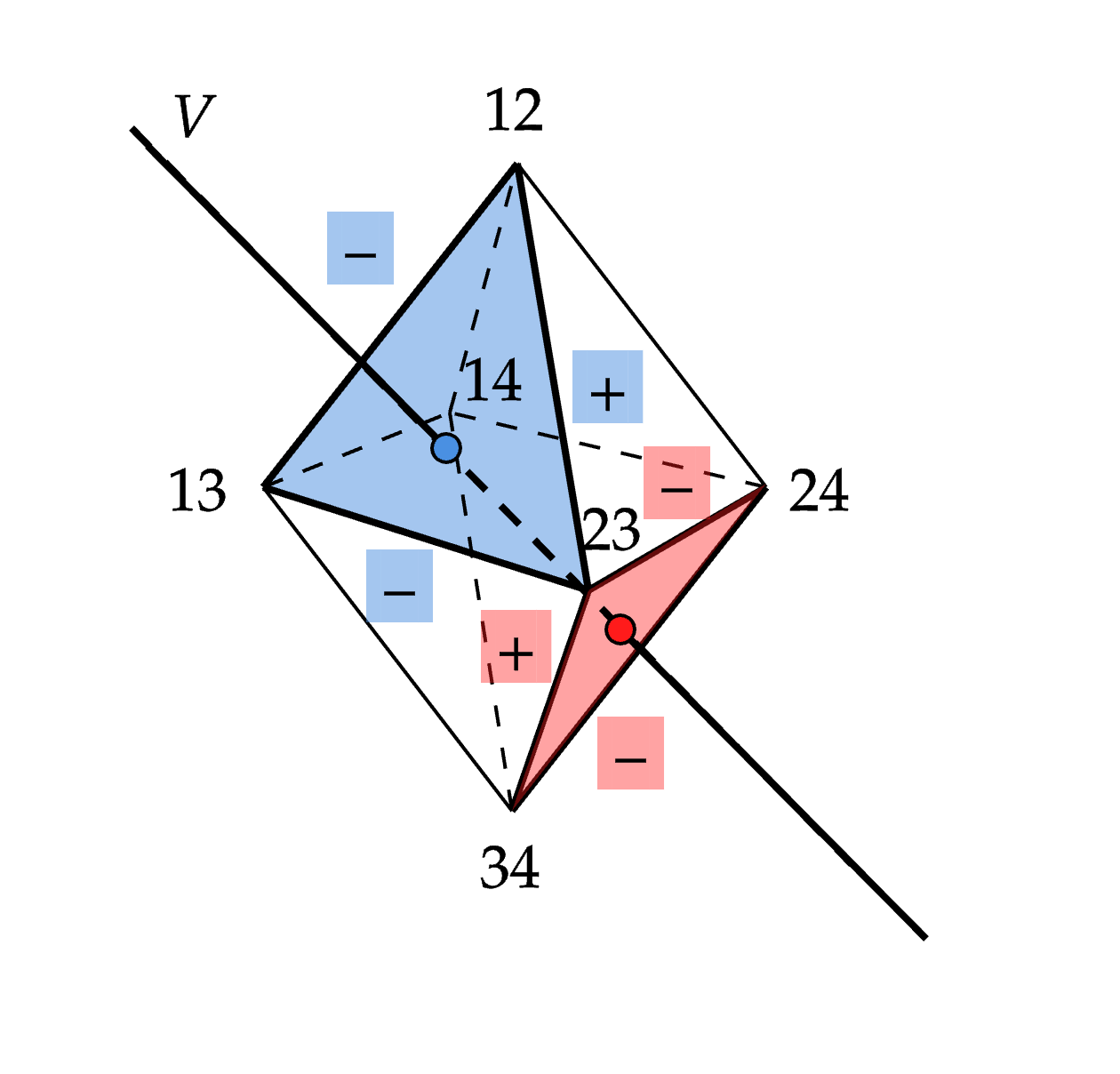}
        \caption{The line $V$ from Example~\ref{ex : octahedron} stabbing the octahedron and the sign condition on the boundaries of the stabbed faces.}
        \label{fig : octahedron}
    \end{figure}

    The set of lines stabbing the octahedron in the face $\conv(v_{12}, v_{13}, v_{23})$ is exactly the set of $V\in \Gr(k,n)$ such that $\sign\left(\CPk(V)|_{E^{12}_{13},E^{12}_{23},E^{13}_{23}}\right) \equiv (-,+,-)$. This description extends to every facet of the octahedron, as proved in Proposition~\ref{prop : simplicial}.
\end{exa}

\section{How to stab a simplicial polytope}\label{sec : simplicial}

We now focus on studying the $k$-stabbing sets of a special class of polytopes, called simplicial polytopes.
\begin{dfn}
    Let $1 \leq \ell < n$. A polytope $P\subseteq \PP^{n-1}$ is said to be $\ell$-simplicial if all the $\ell$-dimensional faces of $P$ are simplices.
\end{dfn}
In particular, cyclic polytopes, simplicial polytopes and $(n-k)$-neighborly polytopes are $(n-k)$-simplicial.
Spaces of dimension $k$ stabbing an $(n-k)$-simplicial polytope have the following explicit characterization. 

\begin{prop}\label{prop : simplicial}
    Let $1\leq k \leq n$ and let $P\subseteq \PP^{n-1}$ be a full-dimensional $(n-k)$-simplicial polytope. Suppose $P$ has $m$ vertices $v_1,\ldots,v_m$.
    If $F$ is an $(n-k)$-dimensional face of $P$ given by the convex hull of some vertices indexed by $S\in \binom{[m]}{n-k+1}$, with $S=\{ s_1< \ldots< s_{n-k+1} \}$, then $V\in \Gr(k,n)$ intersects $F$ if and only if 
    \[ \sign(C_{S\setminus s_{n-k+1}}(V), C_{S\setminus s_{n-k}}(V), \ldots, C_{S\setminus s_1}(V)) \equiv (+,-,\ldots, (-1)^{n-k}) \]
\end{prop}
\begin{proof}
    Let $F_1,\ldots, F_{k-1}$ be faces of $P$ such that $F,F_1,\ldots,F_{k-1}$ do not lie on a facet of $P$. By Theorem~\ref{thm : inequ}, $V$ intersects $P$ in $F$ if and only if the Chow forms of the facets of $F$ evaluated on $V$ have the same sign as $W = \spanv(v_F, v_{F_1},\ldots, v_{F_{k-1}})$. Represent $W$ with a matrix containing the above points in its rows in this order. Then for any $1\leq i \leq n-k+1$ we have that
    \[ C_{S\setminus s_{i}}(W) = \det\begin{pmatrix}
        v_{s_i}^t | v_{F_1}^t | \ldots | v_{F_{k-1}}^t | v_{s_1}^t| \ldots | v_{s_{i-1}}^t| v_{s_{i+1}}^t| \ldots | v_{s_{n-k+1}}^t
    \end{pmatrix}. \]
    The Chow forms $C_{S\setminus s_{i}}(W)$ and
    \[\det\begin{pmatrix}
         v_{F_1}^t | \ldots | v_{F_{k-1}}^t | v_{s_1}^t| \ldots | v_{s_{n-k+1}}^t
    \end{pmatrix}.\]
    differ by a sign factor given by $(-1)^{k+i-1}$. Since the second term does not depend on $i$, we obtain the sign characterization in the statement.
\end{proof}

If $k=n-1$, then we can apply the description given by Proposition~\ref{prop : simplicial} to any polytope. As a consequence, an hyperplane $H \in \Gr(n-1,n)$ stabs a polytope $P$ in an edge $E = \conv(v_1,v_2)$ if and only if 
\[\sign(C_{v_1}(H),C_{v_2}(H)) = (+,-).\] 
This corresponds to requiring that the vertices $v_1$ and $v_2$ are on two different half-spaces defined by $H$, as expected.
\begin{cor}\label{rem : slicing}
    The slicing set of a polytope $P$ is
    \[ P^{[n-1]} = \{ V\in \Gr(n-1,n) \mid \overline{\var}(\mathrm{C}_P^{n-1}(V))\geq 1 \}. \]
\end{cor}

\subsection{Amplituhedra and stabbing sets}\label{sec : stabbing cyclic}

We will now study the relation between amplituhedra and stabbing sets of polytopes. The first example we will consider is that of the totally non-negative Grassmannian. This set can be described as on of the stabbing chambers of the standard simplex, given by spaces intersecting faces defined by consecutive vertices.
\begin{prop}\label{prop : nonneg Grass}
    The following holds:
    \[ \Gr_{\geq 0}(k,n) \subseteq \cl\left(\mS_{\Delta_n}\begin{pmatrix}
        e_1+e_2+\cdots + e_{n-k+1}\\
        \vdots\\
        e_k + e_{k+1}+ \cdots + e_n
    \end{pmatrix} \right), \]
    where $e_1, \ldots, e_n$ are the canonical basis of $\RR^n$.
\end{prop}
\begin{proof}
    Let $W$ be the $k$-dimensional space defined by the matrix in the right-hand side of the equality and let $V\in \Gr_{\geq 0}(k,n)$. The vector of Chow forms of the simplex $\Delta_n$ is given by $((-1)^{\sign(I)}q_I)_{I\in \binom{[n]}{k}}$. Therefore $\sign(\mathrm{C}_{\Delta_m}^k(V)) \leq \left((-1)^{\sign(I)}\right)_{I\in \binom{[n]}{k}}$. Let $F_i$ be the $(n-k)$-dimensional face of $\Delta_n$ given by the convex hull of $e_i,e_{i+1},\ldots, e_{i+n-k}$. By construction $W$ stabs $\Delta_n$ in $F_1, \ldots, F_k$. By Proposition~\ref{prop : simplicial}, we obtain that 
    \[(\sign(C_{F_i\setminus \{v_j\}}(W)))_{j=i,\ldots,n-k+i} = (-1)^{\sign(\{i, \ldots, n-k+i\}\setminus \{j\})}.\] 
    Therefore the sign vector of $V$ and $W$ coincide on the boundaries of the faces $F_1, \ldots, F_k$, hence $V \in \cl(\mS_{\Delta_n}(W))$.
\end{proof}

\begin{rem}
    In general the inclusion is strict. For example the set of lines stabbing the $3$-dimensional simplex in the faces $\conv(e_1,e_2,e_3)$ and $\conv(e_2,e_3,e_4)$ is given by points whose non-zero Pl\"ucker coordinates different from $q_{23}$ have the same sign.
\end{rem}

For more general amplituhedra, we will obtain that they are subsets of the stabbing set of a cyclic polytope.
Recall that cyclic polytopes are polytopes isomorphic to the convex hull of $m$ points on the $n$-dimensional moment curve. The combinatorial equivalence class is the same independently of the choice of points on the moment curve, hence any such polytope is denoted by $C(m,n)$. $C(m,n)$ is a full-dimensional polytope in $\PP^{n-1}$ with a natural order on the vertices given by their order on the moment curve.
Any polytope with an order on the vertices such that the corresponding matrix lies in the totally positive Grassmannian is a cyclic polytope~\cite{SturmfelsCyclic}.

A complete description of the combinatorial structure of the cyclic polytope is given by Gale's evenness condition~\cite{gale}.
In particular, the cyclic polytope $C(m,n)$ is a simplicial polytope. Moreover, any $n$-subset $S\subseteq [m]$ forms a facet of $C(m,n)$ if and only if $S$ satisfies Gale's evenness condition: if $i<j$ are in $[m]\setminus S$, the number of $k\in S$ with $i<k<j$ is even. 

\vspace{5pt}

Recall that a point in the amplituhedron is given by $C\cdot Z$ where $Z$ is the fixed totally positive $n\times(k+m)$ matrix and $C\in \Gr_{\geq 0}(k,n)$. Since $C$ stabs the simplex $\Delta_n$, we can represent $C$ with a $k\times n$ matrix containing only non-negative entries in its first row. Hence $C\cdot Z$ contains in its first row a point in the cyclic polytope defined by $Z$, i.e. $\mA_{n,k,m}\subseteq C(n, k+m)^{[k]}$.

The amplituhedron has been described in terms of sign characterization in~\cite{unwinding}. There it is conjectured that $\mA_{n,k,m}$ is equal to the set $A_{n,k,m}$ of spaces $V\in \Gr(k,k+m)$ such that:
\[ \begin{cases}
(C_{12\ldots (m-1)m}(V), \ldots ,C_{12\ldots(m-1)n}(V)) \text{ has } k \text{ sign flips,}\\
(-1)^kC_{1(i_1\,i_1 +1)\ldots(i_{m_2}\, i_{m_2}+1)}(V)>0, \; C_{(i_1\,i_1 +1)\ldots(i_{m_2}\, i_{m_2}+1)n}(V)>0 \; \text{ if } m \text{ is odd} \\
C_{(i_1\,i_1 +1)\ldots\left(i_{m_2}\, i_{m_2}+1\right)}(V)>0 \; \text{ if } m \text{ is even}
\end{cases}
\]
where $m_2 = \lfloor{\frac{m}{2}}\rfloor$ and all indices are distinct. In the context of the amplituhedron, Chow forms defined by faces of the cyclic polytope are (a subset of) {\em twistor coordinates} and they are denoted by $\langle i_1\ldots i_m \rangle = C_{i_1\ldots i_m}$.
The sign description has been proven to be equivalent to the original definition of the amplituhedron only in the cases $m=1$~\cite{m=1Amplituhedron}, $m=2$~\cite{m2signs}. 

\begin{rem}\label{amplituhedron stabbing}
    Combining the sign description description of the amplituhedron and Proposition~\ref{prop : simplicial} we obtain that if $m=2$ and  $V\in A_{n,k,2} = \mA_{n,k,2}$, then $V$ stabs the cyclic polytope $C(n,k+2)$ in $\left\lceil\frac{k}{2}\right\rceil$ faces of the form $\conv(v_1,v_i, v_{i+1})$ with $1<i<n-1$.
    % Similarly, if $V\in A_{n,k,4}$, then $V$ stabs the cyclic polytope $C(n,k+4)$ in $\left\lceil \frac{k}{2} \right \rceil$ faces of the form $\conv(v_1,v_2,v_3, v_i, v_{i+1})$ for some $3<i<n-1$. Note that in this case we need to show that if $\sign(C_{123i}(V), C_{123(i+1)}) = (+,-)$, then $\sign(C_{13i(i+1)})=-$, which follows from \textcolor{blue}{Finish}.
\end{rem}

The requirement for the sign flips in the definition of $A_{n,k,m}$ to be exactly $k$ is maximal, that is, it is not possible to have more than $k$ sign flips. Spaces defined by requiring a smaller amount of sign flips have been studied for $m=4$ in~\cite{gabriele}.

\section{Open problems}

There are still many open problem related to the $k$-stabbing set of a polytope and its relations to positive geometries. Below, we highlight two directions that we think would be particularly interesting to explore in future research.

\vspace{5pt}
\noindent{\bf Polytopal Schubert arrangements.}
Arrangements of Schubert divisors from faces of polytopes are highly non-generic due to the incidence relations among the faces of the polytope.
We therefore expect them to have nicer properties than generic hypersurface arrangements. Examples from \cite{Schubertarr} show that the regions in a Schubert arrangement might not be contractible, and that different regions can have the same sign vector, see Example~\ref{ex : bad example}.

\begin{que}
    What can be said about the topology of connected regions of the $k$-face Schubert arrangement of a polytope? Is there a combinatorial interpretation for the Euler characteristic of such a Schubert arrangement? Is the number of sign vectors realized by points in $\Gr(k,n)\setminus\HPk$ equal to the number of connected regions of the arrangement?
\end{que}

Breiding, Sturmfels and Wang~\cite{computingarr} recently introduced an algorithm based on Morse theory to compute regions in the complement of a hypersurface arrangement. This has been implemented in {\em Julia}~\cite{julia}, in the package chambers.jl.

\begin{que}
    Is there a more efficient algorithm to compute the number of components in the $k$-face Schubert arrangement of a polytope?
\end{que}

\noindent{\bf Positive geometries.}
The relations between amplituhedra and the $k$-stabbing set $\Pk$ leads to a natural question: Is the set $\Pk$ a positive geometry? In order to answer this question, a deep understanding of the boundary structure of $\Pk$ is necessary. This structure is related to the face poset of the polytope $P$. For example, for lines stabbing a $3$-dimensional polytope, codimension $1$ boundaries are given by lines stabbing the polytope in one point of an edge, codimension $2$ boundaries are given by lines lying on a $2$-dimensional face of the polytope, codimension $3$ boundaries are lines through an edge and a vertex of the same face and, finally, codimension $4$ boundaries are given by the lines spanned by edges. More generally, an answer to the following questions would be necessary.

\begin{que}
    What is the face stratification of the set $\Pk$ and how is it related to boundary poset of $P$? What would the canonical form of $\Pk$ be and how can we compute it? How would this canonical form be related to the canonical form of the polytope or the loop amplituhedron?
\end{que}

% Abbreviations:
% dfn, lemma, thm, cor, prop, exa, rem, conj, que, prob, note

\section*{Acknowledgements} The authors would like to thank Bernd Sturmfels for introducing them to the problem and providing the data to solve the question in specific examples. Moreover the authors are grateful to Evgeniya Akhmedova, Gabriele Dian, Elia Mazzucchelli, Rémi Prébet, Kexin Wang for many useful discussions and explanations. S.S. also thanks Erdenebayar Bayarmagnai, Emiliano Liwski, Fatemeh Mohammadi, Matthias Stout.

\begin{footnotesize}
\noindent {\bf Funding statement}:
Sebastian Seemann was funded by FWO fundamental research fellowship number (11PEP24N).
Francesca Zaffalon was partially supported by  FWO fundamental research fellowship (1189923N), FWO long stay abroad grant (V414224N) and by the European Union (ERC, UNIVERSE PLUS, 101118787).
Views and opinions expressed are however those of the authors only and do not necessarily reflect those of the European Union or the European Research Council Executive Agency. Neither the European Union nor the granting authority can be held responsible for them.
\end{footnotesize}

\begin{small}

\end{small}

\end{document}